\setlist[enumerate]{nosep}
\setlist[enumerate,1]{label={\textup{(\arabic*)}}}
\newtheorem{thm}{Theorem}[section]
\newtheorem{prop}[thm]{Proposition}
\theoremstyle{definition}
\newtheorem{rem}[thm]{Remark}
\numberwithin{equation}{section}
\newcommand{\secref}[1]{Section~\textup{\ref{#1}}}
\newcommand{\apxref}[1]{Appendix~\textup{\ref{#1}}}
\newcommand{\thmref}[1]{Theorem~\textup{\ref{#1}}}
\newcommand{\propref}[1]{Proposition~\textup{\ref{#1}}}
\newcommand{\diagref}[1]{Diagram~\textup{\ref{#1}}}
\newcommand{\cc}[1]{\mathcal{#1}}
\newcommand{\KK}{\cc K}
\newcommand{\FF}{\cc F}
\newcommand{\CC}{\cc C}
\renewcommand{\epsilon}{\varepsilon}
\renewcommand{\:}{\colon}
\newcommand{\inv}{^{-1}}
\newcommand{\wilde}{\widetilde}
\newcommand{\what}{\widehat}
\newcommand{\id}{\text{id}}
\renewcommand{\subset}{\subseteq}
\renewcommand{\bar}{\overline}
\newcommand{\variso}{\overset{\simeq}{\longrightarrow}}
\newcommand{\xt}{\otimes}
\newcommand{\xts}{\xt_*\id}
\renewcommand{\)}{\textup)}
\newcommand{\rt}{\textup{rt}}
\newcommand{\lt}{\textup{lt}}
\newcommand{\cst}{\ensuremath{C^*}-algebra}
\newcommand{\cstg}{\ensuremath{C^*(G)}}
\newcommand{\clspn}{\operatorname{\bar{span}}}
\newcommand{\ad}{\operatorname{Ad}}
\newcommand{\cs}{\ensuremath{\mathbf{C}^*}}
\newcommand{\ac}{\ensuremath{\mathbf{Ac}}}
\newcommand{\co}{\ensuremath{\mathbf{Co}}}
\newcommand{\eac}{\ensuremath{\mathbf{EAc}}}
\newcommand{\kco}{\ensuremath{\mathbf{KCo}}}
\newcommand{\cpc}{\ensuremath{\mathbf{CPC}}}
\newcommand{\cpa}{\ensuremath{\mathbf{CPA}}}
\newcommand{\com}{\ensuremath{\mathbf{Com}}}
\newcommand{\st}{\ensuremath{\mathbf{St}}}
\newcommand{\ma}{\ensuremath{\mathbf{Max}}}
\newcommand{\kalg}{$\KK$-decorated algebra}
\newcommand{\kfixco}{$\KK$-fixing coaction}
\newcommand{\midtext}[1]{\quad\text{#1}\quad}
\newcommand{\righttext}[1]{\quad\text{#1 }}
\begin{document}

\title
{
Coactions on $C^*$-algebras and universal properties
}
\begin{abstract}
{
It is well-known that the maximalization of a coaction of a locally compact group on a C*-algebra enjoys a universal property. We show how this important property can be deduced from a categorical framework by exploiting certain properties of the maximalization functor for coactions. We also provide a dual proof for the universal property of normalization of coactions.
} 
\end{abstract}
\author[B\'edos]{Erik B\'edos}
\address{Department of Mathematics, University of Oslo, PB 1053 Blindern, 0316 Oslo, Norway}
\email{bedos@math.uio.no}
\author[Kaliszewski]{S. Kaliszewski}
\address{School of Mathematical and Statistical Sciences, Arizona State University, Tempe, AZ 85287}
\email{kaliszewski@asu.edu}
\author[Quigg]{John Quigg}
\address{School of Mathematical and Statistical Sciences, Arizona State University, Tempe, AZ 85287}
\email{quigg@asu.edu}
\author[Turk]{Jonathan Turk}
\address{School of Mathematical and Statistical Sciences, Arizona State University, Tempe, AZ 85287}
\email{jturk2@asu.edu}
\date{August 15, 2023}

\thanks{The first author is grateful to the Trond Mohn Foundation for financial support through the project ``Pure Mathematics in Norway'' making it possible for him to make a stay at the Arizona State University (Tempe) in April 2022.}

\subjclass[2000]{46L05, 46L55}
 \keywords{
Action,
coaction,
maximalization,
universal property.
}

\maketitle

\section{Introduction}\label{intro}

Crossed-product duality for $C^*$-algebraic dynamical 
systems requires coactions as well as actions,
and coactions of a locally compact group can come in various flavors when the group is non-amenable.
The two most fundamental flavors are the extremes:
at the bottom are the \emph{normal coactions}, and at the top are the \emph{maximal coactions}.
When the duality theory is cast in categorical terms,
various functors appear, in addition to the ones coming from the obvious crossed-product constructions.
One of these is the \emph{maximalization functor} \cite{dualitiescoactions}, which goes back to the maximalization process introduced 
and studied by Fischer in \cite{fischer}, as a follow-up of \cite{maximal}.

This paper originated when, a few years ago, some of us got puzzled by the final one-line argument in the proof of one of Fischer's foundational theorems, 
 stating that a maximalization of a coaction enjoys a certain universal property.
Having a look at our alternative proof of this result in \cite{reflective} didn't give us any relief as we 
 soon realized that our proof also ought to be supplied with some additional explanation. 
Happily, we have very recently found a new approach to fill this perceived gap, which we present in the current paper.
We formulated the technical result and its proof in categorical terms, since we believe that this is the clearest way to see the ``foundational bedrock'' of the theorem.

More specifically, Fischer's theorem asserts that a certain diagram has a unique completion. Fischer's proof culminates in a much larger diagram,
that includes the diagram of interest, and ends by showing that this larger diagram has a unique completion,
and asserting that this diagram implies the result (although even this assertion is presented in a slightly cryptic form).
We have not communicated with Fischer concerning this, and of course we cannot conclude that Fischer ``forgot'' to relate the larger diagram to the smaller, important, one.
However, due to the extensive influence that this universal property has had on the subsequent development of the theory of cross-product duality, we feel compelled to make all this as clear as possible.

We begin in \secref{categories} with a lightning review of the relevant facts concerning actions and coactions, with (almost) no proofs, 
our main aim being to summarize the functorial approach to the maximalization process for coactions described in \cite{dualitiescoactions}.
But along the way we also introduce various categories and functors that we will need; this gives the review a somewhat nonstandard flavor.
The appendix of \cite{enchilada} has most of the details regarding actions and  normal coactions (for maximal coactions see \cite{maximal}), 
but (perhaps surprisingly) with not such a sharp focus on category theory.  Our presentation relies otherwise on \cite{fullred, maximal, stable, dualitiescoactions}. 
We must also mention that Fischer's article \cite{fischer} is in some sense our main source.

In \secref{gap},
we briefly sketch Fischer's original proof of his universal property of maximalization,
in order to pin-point where we feel this proof suffers from a lack of explanation.
We also mention that our slightly different proof appearing in \cite{reflective} has a similar gap.

In \secref{abstract},
we give an abstract universal property (\propref{aup}) in the setting of any category together with a functor and a natural transformation satisfying a quite simple axiom.
Then we indicate how this abstract result can be used to quickly prove Fischer's universal property for maximalization.

In \secref{dualizing},
we give an abstract result that is dual to \propref{aup},
and show in \thmref{univ-normal} how the universal property of normalization of coactions follows.

In \apxref{stab eq},
we fill a technical gap in the literature, concerning ``destabilization''.
If one allows ``stabilization'' of a \cst\ to mean tensoring with the compact operators $\KK$ on an arbitrary Hilbert space,
in order to recognize an algebra of the form $A\xt\KK$,
we need a completely general result, \propref{stable}.
Various authors have recorded a proof in the separable case (see \cite[Proposition~3.4]{stable}, for example),
but we could not find
a result in the literature at the level of generality we need.
Fortunately, the existing argument can be routinely modified.

We thank the anonymous referees for comments that significantly improved our paper.

\section{The categories and functors}\label{categories}

Throughout, $G$ is an arbitrary locally compact group (with a fixed left Haar measure), and $A,B,C,\dots$ are $C^*$-algebras.
We use $\otimes$ to denote the minimal tensor product of \cst s, and $M(A)$ to denote the multiplier algebra of $A$.

There is a category \cs\ of $C^*$-algebras, in which the morphisms
$\phi\:A\to B$ are nondegenerate homomorphisms $\phi\:A\to M(B)$.
Morphisms extend uniquely to multiplier algebras of their domain, and we frequently keep the same notation for the extension.
A morphism $u\:C^*(G)\to B$ is the integrated form
of a strictly continuous unitary homomorphism $G\to M(B)$, and we use the same notation for both.
This is the basis for all our categories: $C^*$-algebras decorated with extra structure, which the morphisms must preserve.

\subsection*{Actions}

There is a category \ac\ of actions $(A,\alpha)$ of $G$, in which morphisms are \cs-morphisms that
are equivariant
for the actions.
A morphism $u\:C^*(G)\to B$ in $\cs$ gives rise to a \emph{unitary action} $\ad u$,
and if $\pi\:(A,\alpha)\to (B,\ad u)$ is a morphism in \ac,
then $(B,\pi,u)$ is a \emph{covariant representation} of $(A,\alpha)$.
A \emph{\(full\) crossed product} of $(A,\alpha)$ is a universal covariant representation $(A\rtimes_\alpha G,i_A,i_G)$,
i.e., for every covariant representation $(B,\pi,u)$ of $(A,\alpha)$ there is a unique morphism $\pi\times u\:A\rtimes_\alpha G\to B$ in \cs\ such that
\[
(\pi\times u)\circ i_A=\pi
\righttext{and}
(\pi\times u)\circ i_G=u.
\]
By abstract nonsense, any two crossed products of $(A,\alpha)$ are 
unique up to unique isomorphism.
We (imagine that we) pick one and call it ``the'' crossed product.

Throughout this paper\footnote{Except in \propref{stable}, where $\KK=\KK(H)$ for an arbitrary Hilbert space~$H$.}, we write
$\KK=\KK(L^2(G))$
for the \cst\ of compact operators on $L^2(G)$ (defined with respect to the chosen left Haar measure), 
$M:C_0(G)\to \KK$ for the nondegenerate representation of $C_0(G)$ by multiplication operators, $\rho$ and $\lambda$ for the right and left 
regular representations of $G$ on $L^2(G)$, respectively,
and $\rt$ and $\lt$ for the actions of $G$ on $C_0(G)$ by right and left translation, respectively.
We use without comment the Stone-von Neumann Theorem: $(\KK,M,\rho)$  is a crossed product of the action $(C_0(G),\rt)$.

\subsection*{Coactions}

The theory of coactions is dual to that of actions,
and when $G$ is abelian coactions correspond via the Fourier transform to actions of the dual group $\what G$.
Technically, a (full) \emph{coaction} of $G$ on $A$ is a pair $(A,\delta)$,
where $\delta\:A\to A\xt C^*(G)$ is a morphism in \cs,
i.e., $\delta$ is a nondegenerate homomorphism into $M(A\xt C^*(G))$,
such that
\[
\clspn\{\delta(A)(1\xt \cstg)\}=A\xt \cstg
\]
and
the diagram
\[
\begin{tikzcd}
A \arrow[r,"\delta"] \arrow[d,"\delta",swap]
&A\xt C^*(G) \arrow[d,"\delta\xt\id"]
\\
A\xt C^*(G) \arrow[r,"\id\xt\delta_G"]
&A\xt C^*(G)\xt C^*(G)
\end{tikzcd}
\]
commutes in $\cs$.
Here the morphism $\delta_G\:C^*(G)\to C^*(G)\xt C^*(G)$ is the integrated form of the canonical unitary homomorphism $s\mapsto s\xt s$ for $s\in G$.
There is a category $\co$ of coactions of $G$, in which the morphisms $\phi:(A,\delta)\to (B,\epsilon)$
are \cs-morphisms $\phi:A\to B$ that are $\delta-\epsilon$ equivariant, i.e., the diagram
\[
\begin{tikzcd}
A 
\arrow[r,"\delta"] \arrow[d,"\phi",swap]
&A\xt C^*(G)
\arrow[d,"\phi\xt\id"]
\\
B 
\arrow[r,"\epsilon",swap]
&B\xt C^*(G)
\end{tikzcd}
\]
commutes.
A morphism $\mu\:C_0(G)\to B$ in $\cs$ gives rise to a \emph{unitary coaction}
$\delta_\mu$ on $B$
defined by the formula
\[
\delta_\mu(b)=\ad (\mu\xt\id(w_G))(b\xt 1),
\]
and if  $\pi\:(A,\delta)\to (B,\delta_\mu)$ is a morphism in \co,
then $(B,\pi,\mu)$ is a \emph{covariant representation} of $(A,\delta)$.
A (full) \emph{crossed product} of $(A,\delta)$ is a universal covariant representation
$(A\rtimes_\delta G,j_A,j_G)$,
i.e., for every covariant representation $(B,\pi,\mu)$ of $(A,\delta)$ there is a unique morphism $\pi\times \mu\:A\rtimes_\delta G\to B$ in \cs\ such that
\[
(\pi\times \mu)\circ j_A=\pi
\righttext{and}
(\pi\times \mu)\circ j_G=\mu.
\]
By abstract nonsense, any two crossed products of $(A,\delta)$ are 
unique up to unique isomorphism,
and we 
pick one and call it ``the'' crossed product.

If $(A,\delta)$ is a coaction, then there is a coaction on $A\xt\KK$ given by
\[
\delta\xts:=(\id\xt\Sigma)\circ(\delta\xt\id),
\]
where
\[
\Sigma\:C^*(G)\xt\KK\simeq \KK\xt C^*(G)
\]
is the flip isomorphism determined by $c\xt k\mapsto k\xt c$, and there is a natural isomorphism
\[
(A\xt\KK)\rtimes_{\delta\xts} G\simeq (A\rtimes_\delta G)\xt \KK.
\]

If $(A,\alpha)$ is an action, there is a canonical coaction $\what\alpha$ on $A\rtimes_\alpha G$,
called the \emph{dual coaction},
determined by
\begin{align*}
\what\alpha\circ i_A&=i_A\xt 1\\
\what\alpha\circ i_G(s)&=i_G(s)\xt s,
\end{align*}
and \emph{Imai-Takai duality}
{(see \cite[Theorem~A.67]{enchilada} for this formulation)}
says that
\[
A\rtimes_\alpha G\rtimes_{\what\alpha} G\simeq A\xt \KK.
\]
Similarly, if $(A,\delta)$ is a coaction, there is a \emph{dual action} $\what\delta$ on $A\rtimes_\delta G$,
where for $s\in G$ the automorphism $\what\delta_s$ is determined by
\begin{align*}
\what\delta_s\circ j_A&=j_A\\
\what\delta_s\circ j_G&=j_G\circ \rt_s.
\end{align*}
There is a \emph{canonical surjection}
{(see \cite[Corollary~2.6]{nilsen})}
\[
\Phi_{(A, \delta)}\:A\rtimes_\delta G\rtimes_{\what\delta} G\to A\xt\KK,
\]
and $\delta$ is \emph{maximal} if $\Phi_{(A,\delta)}$ is an isomorphism
{(\cite[Definition~3.1]{maximal})};
this is called \emph{Katayama duality}
for maximal coactions.

If $\phi\:(A,\delta)\to (B,\epsilon)$ is a morphism in $\co$, then there is a 
canonical \emph{crossed-product} morphism
\[
\phi\rtimes G\:(A\rtimes_\delta G,\what\delta)\to (B\rtimes_\epsilon G,\what\epsilon)
\]
in \ac, and this induces a functor $\co\to\ac$.
Similarly, if $\phi\:(A,\alpha)\to (B,\beta)$ is a morphism in \ac, then
there is a canonical \emph{crossed-product} morphism
\[
\phi\rtimes G\:(A\rtimes_\alpha G,\what\alpha)\to (B\rtimes_\beta G,\what\beta)
\]
in $\co$, and this gives a functor $\ac\to\co$. 

\label{max def}
A \emph{maximalization} of a coaction $(A,\delta)$ is a triple $(B,\epsilon,\psi)$ consisting of a maximal coaction $(B,\epsilon)$  
and a morphism $\psi:(B,\epsilon)\to (A,\delta)$ in \co\ such that $\psi\rtimes G:B\rtimes_\epsilon G\to A\rtimes_\delta G$ is an isomorphism of \cst s.

\subsection*{$\KK$-decorated algebras}

A \emph{\kalg} is a pair $(A,\iota)$, where $\iota:\KK\to A$ is a morphism in \cs.
There is a category of \kalg s in which a morphism $\phi\:(A,\iota)\to (B,\jmath)$ is a \cs-morphism $\phi\:A\to B$ such that
$\phi\circ\iota=\jmath$. We occasionally refer to the map $\iota$ as a \emph{$\KK$-decoration}.

If $(A,\delta)$ is a coaction and $(A,\iota)$ is a \kalg, 
then $\delta$ is \emph{$\KK$-fixing} if $\delta\circ\iota=\iota\xt 1$, 
and we then call $(A,\delta,\iota)$ a \emph{\kfixco}.  
The categories of coactions and of \kalg s combine immediately to form a 
category \kco\ of \kfixco s in which a morphism $\phi\:(A,\delta, \iota)\to (B, \epsilon, \jmath)$ 
 is a morphism $\phi\:(A, \iota) \to (B, \jmath)$ of $\KK$-decorated algebras that is $\delta-\epsilon$ equivariant.

The \emph{relative commutant} of a \kalg\ $(A,\iota)$ is the \cst
\[
C(A,\iota)=\{m\in M(A):m\iota(k)=\iota(k)m\in A\text{ for all }k\in\KK\}.
\]

Our reason for introducing relative commutants is
the following underappreciated fact:
if $(A, \iota)$ is a \kalg, then there is an isomorphism
$C(A, \iota) \xt \KK\variso A$ given on elementary tensors by
$m \xt k\mapsto m\iota(k)$.
This is probably folklore;
a proof can be found in 
\cite[Proposition 3.4]{stable},
where we give
a more detailed alternative to the argument in \cite[Remark 3.1]{fischer} (see also \cite[Lemma~27.2]{exelbook}, \cite[Theorem~2.1]{HR}). As $G$ is assumed to be second-countable in \cite{stable, exelbook} (but not in \cite{fischer}), 
we have included a proof of the general case in
\propref{stable}
in order to get rid of this unnecessary assumption.

It follows that $C(A,\iota)$ is a nondegenerate $C^*$-subalgebra of $M(A)$,
and \cite[Remark~3.1]{fischer} characterizes it as
the unique closed subset $Z$ of $M(A)$
that commutes elementwise with $\iota(\KK)$ and satisfies
$\clspn\{Z\iota(\KK)\}=A$.
Moreover, Fischer further shows that $M(C(A,\iota))$ is the commutant of $\iota(\KK)$ in $M(A)$.
Using these facts, it is easy to show that
any morphism $\phi:(A,\iota)\to (B,\jmath)$ of \kalg s 
restricts (after being extended to $M(A)$) to a morphism $C(\phi):C(A,\iota)\to C(B,\jmath)$ in \cs.
If $(A,\delta,\iota)$ is a \kfixco, then
\[
\delta\:(A,\iota)\to (A\xt C^*(G),\iota\xt 1)
\]
is a morphism of \kalg s.
Taking relative commutants gives a morphism
\[
C(\delta):C(A,\iota)\to C(A\xt\cstg,\iota\xt 1),
\]
and Fischer's characterization quoted above
easily yields the equality
\[
C(A\xt C^*(G),\iota\xt 1) = C(A,\iota)\xt C^*(G),
\]
from which we conclude
(see \cite[Remark~3.2]{fischer}, and also \cite[Lemma~3.2]{dualitiescoactions})
that $C(\delta)$ is a coaction on $C(A,\iota)$.
Moreover, if $\phi: (A,\delta, \iota)\to (B, \epsilon, \jmath)$ is a morphism in \kco\ then $C(\phi):(C(A,\iota), C(\delta)) \to (C(B,\jmath), C(\epsilon))$ gives a morphism in \co.
In this way the relative commutant induces a functor $\com:\kco\to \co$,
called the nondegenerate destabilization functor in \cite{stable}, given by the assignments
\[  (
A,\delta,\iota)\mapsto (C(A,\iota),C(\delta))
\midtext{and}
\phi\mapsto C(\phi).
\]
In fact, \com\ is a category equivalence, with quasi-inverse $\st:\co\to\kco$ given by the assignments
\begin{align*}
    (A,\delta)&\mapsto (A\xt \KK,\delta\xts,1\xt \id_\KK)\\
    \phi&\mapsto \phi\xt\id_\KK.
\end{align*}
Equivalently, the stabilization functor \st\ is a category equivalence with quasi-inverse \com. 
%
For each $\KK$-fixing coaction $(A,\delta,\iota)$, the isomorphism $C(A,\iota)\xt\KK\variso A$ described above is $(C(\delta)\xt\id)-\delta$ equivariant,
and we denote it by $\theta_{(A,\delta,\iota)}$.
These isomorphisms combine to give a
natural isomorphism $\theta:\st\circ\com\variso \id_{\kco}$.
Moreover, category theory tells us that there is also a unique natural isomorphism
$\tau:\id_\co\variso \com\circ\st$ satisfying the \emph{triangle identities}
\[
\begin{tikzcd}
\st \arrow[r,"\st\,\circ\,\tau"] \arrow[dr,"1_{\st}"']
&\st\circ\com\circ\st \arrow[d,"\theta\,\circ\,\st"]
\\
&\st
\end{tikzcd}
\midtext{and}
\begin{tikzcd}
\com \arrow[r,"\tau\,\circ\,\st"] \arrow[dr,"1_{\com}"']
&\com\circ\st\circ\com \arrow[d,"\com\circ\theta"]
\\
&\com
\end{tikzcd}
\]
More precisely, $\theta$ is the counit for the adjunction $\st\dashv\com$, and then $
\tau$ is the unique unit (for the uniqueness, see for example, \cite[Theorem~2.3.6]{leinster}, or more accurately its dual version).
Of course, the unit is a natural isomorphism since the counit is.

Actually, it will be more convenient for us to have the inverse natural isomorphism
\begin{equation}\label{kappa}
\kappa:=\tau\inv:\com\circ\st\variso \id_\co.
\end{equation}

\subsection*{Cocycles}

If $(A,\delta)$ is a coaction, a \emph{$\delta$-cocycle} is a unitary $u\in M(A\xt C^*(G))$
such that
\begin{itemize}
\item
$\id\xt\delta_G(u)=(u\xt 1)(\delta\xt\id)(u)$ and
\item
$u\delta(A)u^*(1\xt \cstg)\subset A\xt \cstg$.
\end{itemize}
The above axioms are designed so that
$\delta^u:=\ad u\circ\delta$ is another coaction on $A$,
called a \emph{perturbation} of $\delta$ by $u$ or a \emph{perturbed coaction}.
Cocycles are natural in the following sense:
if $\phi\:(A,\delta)\to (B,\epsilon)$ is a morphism in $\co$ and $u$ is a $\delta$-cocycle,
then
$(\phi\xt\id)(u)$
is an $\epsilon$-cocycle, and $\phi$ is also 
$\delta^u-\epsilon^{(\phi\xt\id)(u)}$
equivariant.

Moreover, there is an isomorphism
\[
\Omega_u\:(A\rtimes_{\delta^u} G,\what{\delta^u})
\variso (A\rtimes_\delta G,\what\delta)
\]
in \ac\ for each $\delta$-cocycle $u$, and furthermore this family of isomorphisms is
natural in the sense that if $\phi\:(A,\delta)\to (B,\epsilon)$ is a morphism in \co\ then the diagram
\[
\begin{tikzcd}
\bigl(A\rtimes_{\delta^u} G,\what{\delta^u}\bigr)
\arrow[r,"\phi\rtimes G"] \arrow[d,"\Omega_u"',"\simeq"]
&\bigl(B\rtimes_{\epsilon^{(\phi\xt\id)(u)}} G,\what{\epsilon^{(\phi\xt\id)(u)}}\bigr)
\arrow[d,"\Omega_{(\phi\xt\id)(u)}","\simeq"']
\\
(A\rtimes_\delta G,\what\delta)
\arrow[r,"\phi\rtimes G",swap]
&(B\rtimes_\epsilon G,\what\epsilon)
\end{tikzcd}
\]
commutes in \ac.

An \emph{equivariant action} is a triple $(B,\alpha,\mu)$, where $(B, \alpha)$ is an action and $\mu:(C_0(G),\rt)\to (B,\alpha)$ is a morphism in \ac.
The category \eac\ has equivariant actions $(B,\alpha,\mu)$ as objects,
and a morphism $\phi:(B,\alpha,\mu)\to (C,\beta,\nu)$ in \eac\ is an \ac-morphism $\phi:(B,\alpha)\to (C,\beta)$
such that $\phi\circ\mu=\nu$.


For any equivariant action $(B,\alpha,\mu)$,
by \cite[Lemma~3.1]{dualitiescoactions}
the unitary $u:=(i_B\circ\mu\xt\id)(w_G)$
is
an $\what\alpha$-cocycle 
such that the perturbed coaction
$\wilde\alpha:=(\what\alpha)^u$ is $\KK$-fixing, where we use the $\KK$-decoration
\[
\mu\rtimes G:\KK=C_0(G)\rtimes_{\rt} G\to B\rtimes_\alpha G.
\]
Thus, the triple $(B\rtimes_\alpha G,\wilde\alpha,\mu\rtimes G)$ is a \kfixco, and the crossed product induces a functor $\cpa:\eac\to \kco$ 
with assignments
\[
(B,\alpha,\mu)\mapsto (B\rtimes_\alpha G,\wilde\alpha,
\mu\rtimes G)
\midtext{and}
\phi\mapsto \phi\rtimes G.
\]

On the other hand, for any coaction $(A,\delta)$, the triple $(A\rtimes_\delta G,\what\delta,j_G)$
is an equivariant action, and the crossed product induces a functor $\cpc:\co\to\eac$ with assignments
\[
(A,\delta)\mapsto (A\rtimes_\delta G,\what\delta,j_G)
\midtext{and}
\phi\mapsto \phi\rtimes G.
\]
Then we can apply the above to obtain a \kfixco\
\[
(A\rtimes_\delta G\rtimes_{\what\delta} G,\wilde\delta,j_G\rtimes G),
\]
where we have simplified the notation by
writing $\wilde\delta$ rather than $\wilde{\what\delta}$.
We compose to
get the functor $\cpa \circ \cpc:\co \to \kco$ with assignments
\[(A,\delta)\mapsto (A\rtimes_\delta G\rtimes_{\what\delta} G,\wilde \delta,j_G\rtimes G)
\midtext{and}
\phi\mapsto (\phi\rtimes G)\rtimes G.
\]

\subsection*{Maximalization}

Let $(A, \delta)$ be a coaction. The canonical surjection $\Phi_{(A,\delta)}:A\rtimes_\delta G\rtimes_{\what\delta} G\to A\otimes\KK$
is compatible with enough extra structure that it gives a morphism 
\[ 
\Phi_{(A,\delta)}:(A\rtimes_\delta G\rtimes_{\what\delta} G, \wilde \delta,j_G\rtimes G )\to (A\otimes\KK,\delta\xts,1\xt \id_\KK)
\]
in \kco. Moreover, the family of morphisms $\{\Phi_{(A, \delta)}\}$ gives a natural transformation
$\Phi:\cpa\circ\cpc\to \st$.

We define a functor $\ma:\co\to\co$ as the composition
\[
\ma:=\com\circ\cpa\circ\cpc.
\]
We write
\[
(A^m,\delta^m) := \ma(A,\delta)
\]
on objects in \co, and $\phi^m := \ma(\phi)$ on morphisms in \co.
Fischer proves in \cite[Theorem~6.4]{fischer} that the coaction $(A^m,\delta^m)$ is maximal.

Since the functor $\com:\kco\to \co$ is a category equivalence, it follows quickly that
$(A,\delta)$ is maximal if and only if $\com(\Phi_{(A,\delta)})$
is an isomorphism, which in turn is equivalent with
\[
\kappa_{\com(\Phi_{(A,\delta)})}:(A^m,\delta^m)\to (A,\delta)
\]
being an isomorphism, where $\kappa:\com\circ\st:\variso \id_\co$ is the natural isomorphism from \eqref{kappa}.

Thus $\psi:=\kappa\circ\com\circ\Phi$ gives a natural transformation $\ma\to\id_\co$,
and $(A,\delta)$ is maximal if and only if $\psi_{(A,\delta)}:(A^m,\delta^m)\to (A,\delta)$ is an isomorphism.
For any coaction $(A,\delta)$, Fischer (further) proves in \cite[Theorem~6.4]{fischer} that $(A^m,\delta^m,\psi_{(A,\delta)})$ is a maximalization of $(A,\delta)$.
It follows almost trivially that  $\cpc\circ \psi:\cpc\circ \ma\to \cpc$ is a natural isomorphism.
Then composing on the left with the functor  $\com\circ \cpa$,
 we deduce that
\[
\ma\circ \psi:\ma^2\to \ma
\]
is a natural isomorphism. In \secref{abstract} we will take these properties of maximalization as the starting point for a quite abstract proof of a
universal property.

\begin{rem}
In the literature the term ``maximalization'' has been used in several ways, and as a result there is potential for confusion.
For example, \cite[Definition~3.1]{maximal} and \cite[Definition~6.1]{fischer}
use the definition we quoted on page~\pageref{max def},
but \cite[Definition~6.1.3]{reflective} instead defines maximalization in terms of Fischer's universal property
\cite[Lemma~6.2]{fischer}.
We emphasize that in this paper we use the definition given on page~\pageref{max def}.
Then \cite[Theorem~3.3]{maximal} and \cite[Theorem~6.4]{fischer}
both prove that every coaction has a maximalization; note that there is no danger of circular reasoning in \cite{fischer}, because
careful examination of Fischer's proof reveals that it does not appeal to the universal property
\cite[Lemma~6.2]{fischer}.
Nevertheless, to avoid any confusion, 
we rely here only upon the statement and proof of
\cite[Theorem~3.3]{maximal}.
\end{rem}

\section{The gap}\label{gap}

In this short section we give a formal statement of Fischer's universal property,
and we briefly point out the gap in the proof.
Theorem \ref{refl-max} below reproduces Proposition~6.1.11 in \cite{reflective},
which includes
Fischer's universal property of maximalization
\cite[Lemma~6.2]{fischer}.


\begin{thm}[\cite{reflective}]\label{refl-max}
Let $(B,\epsilon)$ be a maximal coaction and $\psi\:(B,\epsilon)\to (A,\delta)$ be a morphism in \co. Then $(B,\epsilon,\psi)$ is a maximalization of $(A, \delta)$
if and only if for every maximal coaction $(C,\zeta)$ and every morphism $\tau\:(C,\zeta)\to (A,\delta)$ in \co\ there exists a unique morphism $\pi$ making the diagram
\begin{equation}\label{universal property}
\begin{tikzcd}
(C,\zeta) \arrow[d,"\pi"',"!",dashed] \arrow[dr,"\tau"]
\\
(B,\epsilon) \arrow[r,"\psi",swap]
&(A,\delta)
\end{tikzcd}
\end{equation}
commute in \co.
\end{thm}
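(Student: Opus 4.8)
The plan is to read both implications off the structure assembled above, where the endofunctor $\ma$ together with the natural transformation $\psi\:\ma\to\id_\co$ plays the role of a reflection onto the maximal coactions, these being precisely the objects at which this natural transformation is an isomorphism. To avoid a clash with the fixed morphism $\psi$ of the statement, I write $\psi_{(X,\gamma)}\:(X^m,\gamma^m)\to(X,\gamma)$ for the components of the natural transformation.

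For the forward implication, suppose $(B,\epsilon,\psi)$ is a maximalization and let $(C,\zeta)$ be maximal with a morphism $\tau\:(C,\zeta)\to(A,\delta)$. I would first record that $\psi_{(C,\zeta)}$ and $\psi_{(B,\epsilon)}$ are isomorphisms (because $(C,\zeta)$ and $(B,\epsilon)$ are maximal), and that $\ma(\psi)$ is an isomorphism: indeed $\cpc(\psi)=\psi\rtimes G$ is an isomorphism in $\eac$ (its $C^*$-inverse automatically respects the extra structure), so $\ma(\psi)=\com\circ\cpa\circ\cpc(\psi)$ is an isomorphism because functors preserve isomorphisms. I would then set
\[
\pi:=\psi_{(B,\epsilon)}\circ\ma(\psi)^{-1}\circ\ma(\tau)\circ\psi_{(C,\zeta)}^{-1}
\]
and check $\psi\circ\pi=\tau$ using naturality of $\psi$ twice: at the morphism $\psi$, rewriting $\psi\circ\psi_{(B,\epsilon)}$ as $\psi_{(A,\delta)}\circ\ma(\psi)$ so that $\ma(\psi)^{-1}$ cancels, and then at the morphism $\tau$, rewriting $\psi_{(A,\delta)}\circ\ma(\tau)$ as $\tau\circ\psi_{(C,\zeta)}$. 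For uniqueness, any $\pi'$ with $\psi\circ\pi'=\tau$ satisfies $\ma(\pi')=\ma(\psi)^{-1}\circ\ma(\tau)$ after applying $\ma$, and naturality of $\psi$ at $\pi'$ then gives $\pi'=\psi_{(B,\epsilon)}\circ\ma(\pi')\circ\psi_{(C,\zeta)}^{-1}$, which is completely determined; hence $\pi'=\pi$.

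For the converse, I would lean on the fact that the canonical triple $(A^m,\delta^m,\psi_{(A,\delta)})$ is already a maximalization, and hence, by the forward implication just established, satisfies the universal property of \eqref{universal property}. Assuming $(B,\epsilon,\psi)$ also satisfies this universal property, the usual uniqueness-of-universal-objects argument applies: the universal property of $(B,\epsilon,\psi)$, applied to the maximal coaction $(A^m,\delta^m)$ with $\tau=\psi_{(A,\delta)}$, yields $\pi$ with $\psi\circ\pi=\psi_{(A,\delta)}$; that of $(A^m,\delta^m,\psi_{(A,\delta)})$, applied to $(B,\epsilon)$ with $\psi$, yields $\pi'$ with $\psi_{(A,\delta)}\circ\pi'=\psi$; and since $\id$ provides another factorization in each case, uniqueness forces $\pi\circ\pi'=\id$ and $\pi'\circ\pi=\id$. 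Thus $\pi$ is an isomorphism with $\psi=\psi_{(A,\delta)}\circ\pi^{-1}$, so $\psi\rtimes G=(\psi_{(A,\delta)}\rtimes G)\circ(\pi^{-1}\rtimes G)$ is a composite of isomorphisms, and $(B,\epsilon,\psi)$ is a maximalization.

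The main obstacle is the forward implication, specifically the step that $\ma(\psi)$ is an isomorphism. This is exactly where the advertised gap lives: being a maximalization is a statement about the crossed product $\psi\rtimes G$, whereas the desired conclusion is a factorization inside $\co$, and the two are bridged only through the functor $\ma$ and the naturality of $\psi$. Making this bridge explicit, instead of passing silently from a diagram of crossed products to one of coactions, is the crux, and it is precisely the content that the abstract framework of \secref{abstract} is meant to isolate.
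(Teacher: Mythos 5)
Your proposal is correct and takes essentially the same approach as the paper: your existence formula and, crucially, your uniqueness argument (apply $\ma$, cancel the invertible image, then recover $\pi'$ from $\ma(\pi')$ via naturality of $\psi$) are exactly the proof of the paper's abstract \propref{aup} instantiated with $\CC=\co$ and $M=\ma$. The only cosmetic differences are that you run the argument directly for an arbitrary maximalization $(B,\epsilon,\psi)$, using invertibility of $\ma(\psi)$ in place of the component of the natural isomorphism $\ma\psi$ at $(A,\delta)$, and that you write out the routine converse, which the paper handles by the standard uniqueness-of-universal-objects comparison with the canonical maximalization $(A^m,\delta^m,\psi_{(A,\delta)})$.
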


One direction of \thmref{refl-max}
was originally stated in \cite{fischer}, but there is a gap in Fischer's argument for the uniqueness part.
The idea is to embed \diagref{universal property} in the following larger diagram:
\[
\begin{tikzcd}[column sep=huge,row sep=huge]
C\rtimes G\rtimes G\arrow[r,"\tau\rtimes G\rtimes G"] \arrow[d,
"\Phi_{(C, \zeta)}"',
"\simeq"]
&A\rtimes G\rtimes G \arrow[d,"\Phi_{(A,\delta)}",swap]
&B\rtimes G\rtimes G \arrow[l,
"\psi\rtimes G\rtimes G"',
"\simeq"] 
\arrow[d,
"\Phi_{(B, \epsilon)}",
"\simeq"']
\\
C\xt \KK \arrow[r,"\tau\xt\id",swap]
\arrow[rr,"\pi\xt\id"',"!",bend right,dashed]
&A\xt \KK
&B\xt \KK, \arrow[l,
"\psi\xt\id"]
\end{tikzcd}
\]
then use the various properties to deduce the existence of a unique morphism $\pi$ making the whole diagram commute.
However (upon recent reflection) 
it is unclear to us why that implies that $\pi$ is the unique morphism making
\diagref{universal property}
commute, a conclusion that
Fischer seems to take as obvious.
Another proof was given in \cite{reflective}, but there is again a (similar) gap with the uniqueness in that proof.

\section{The abstract approach}\label{abstract}

In this section we first prove an abstract universal property that assumes a couple of category-theoretic axioms. Then we apply this result to get a 
complete, short proof of the universal property of maximalization.

Suppose $\CC$ is a category, $M:\CC\to\CC$ is a functor, and $\psi:M\to \id_\CC$ is a natural transformation such that $M\psi:M^2\to M$
is a natural isomorphism. We recall that the natural transformation $M\psi$ is given by
\[
(M\psi)_x:=M\psi_x:M^2x\to Mx
\]
for each object $x$ of $\CC$.
Let $\CC_m$ be the full subcategory of $\CC$ formed by those objects $x$ for which $\psi_x$ is an isomorphism.

\begin{prop}[Abstract universal property]\label{aup}
If 
$f:y\to x$ in $\CC$ with $y\in\CC_m$, then there is a unique morphism 
$\wilde f$ in $\CC$ making the diagram
\[
\begin{tikzcd}
y \arrow[r,"\wilde f",dashed] \arrow[dr,"f"']
&Mx \arrow[d,"\psi_x"]
\\
&x
\end{tikzcd}
\]
commute.
\end{prop}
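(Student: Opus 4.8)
The plan is to treat existence and uniqueness separately. Existence I expect to be an immediate consequence of naturality together with the hypothesis $y\in\CC_m$, while uniqueness is where the real content lies and where I anticipate the main difficulty.

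For existence, since $y\in\CC_m$ the component $\psi_y\:My\to y$ is an isomorphism, so I would simply set
\[
\wilde f := Mf\circ\psi_y\inv\:y\to Mx.
\]
Naturality of $\psi$ applied to $f\:y\to x$ gives $\psi_x\circ Mf=f\circ\psi_y$, whence $\psi_x\circ\wilde f=f\circ\psi_y\circ\psi_y\inv=f$, so the required triangle commutes.

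For uniqueness I would suppose $g\:y\to Mx$ is any morphism with $\psi_x\circ g=f$ and aim to show $g=\wilde f$. The naive approach, cancelling $\psi_x$ from $\psi_x\circ g=\psi_x\circ\wilde f$, is unavailable: the target $x$ need not lie in $\CC_m$, so $\psi_x$ need not be monic. The key observation — and the place where the hypothesis that $M\psi$ is a \emph{natural isomorphism} is genuinely used — is that although $\psi_x$ may fail to be monic, the morphism $M\psi_x=(M\psi)_x$ obtained by applying $M$ is a component of the natural isomorphism $M\psi$, hence is itself an isomorphism and in particular monic. So the plan is to apply the functor $M$ to the defining equation: from $\psi_x\circ g=f=\psi_x\circ\wilde f$ and functoriality I obtain $M\psi_x\circ Mg=Mf=M\psi_x\circ M\wilde f$, and cancelling the monic $M\psi_x$ gives $Mg=M\wilde f$.

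It then remains to descend this equality one level. Here I would invoke naturality of $\psi$ a second time, applied to $g$ and to $\wilde f$ regarded as morphisms $y\to Mx$, yielding $g\circ\psi_y=\psi_{Mx}\circ Mg$ and $\wilde f\circ\psi_y=\psi_{Mx}\circ M\wilde f$; since $Mg=M\wilde f$ the right-hand sides coincide, so $g\circ\psi_y=\wilde f\circ\psi_y$, and as $\psi_y$ is an isomorphism (hence epic) I conclude $g=\wilde f$. I expect the main obstacle to be conceptual rather than computational, namely recognizing that monicity of $\psi_x$ is neither available nor needed, the correct substitute being to pass through $M$ — where the hypothesis on $M\psi$ supplies an honest isomorphism — and then to return using invertibility of $\psi_y$. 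This is precisely the descent step whose analogue was left implicit in the earlier arguments discussed in \secref{gap}.
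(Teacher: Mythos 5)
Your proof is correct and takes essentially the same route as the paper's: the same construction $\wilde f = Mf\circ\psi_y\inv$ for existence, and the same uniqueness argument --- apply $M$, cancel the isomorphism $M\psi_x$ to get equality of images under $M$, then descend via the naturality square of $\psi$ at a morphism $y\to Mx$ together with invertibility of $\psi_y$. The only cosmetic difference is that the paper compares two arbitrary solutions $h,k$ (writing $h=\psi_{Mx}\circ Mh\circ\psi_y\inv$) while you compare an arbitrary solution against the constructed $\wilde f$.
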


\begin{proof}
(Existence) The natural transformation $\psi$ gives the following commutative diagram:
\[
\begin{tikzcd}
My \arrow[r,"Mf"] \arrow[d,"\psi_y"']
&Mx \arrow[d,"\psi_x"]
\\
y \arrow[r,"f"']
&x
\end{tikzcd}
\]
Since $\psi$ is invertible, we can take $\wilde f=Mf\circ\psi_y\inv$

(Uniqueness) Suppose $h, k : y\to M x$ and $\psi_x h=\psi_x k=f$.
Applying the functor $M$ gives 
\[
M\psi_x\circ Mh=M\psi_x\circ Mk,
\]
and invertibility of $M\psi_x$ implies $Mh = Mk$. Applying the natural transformation $\psi$ to $h$ gives a commutative diagram
\[
\begin{tikzcd}
My \arrow[r,"Mh"] \arrow[d,"\psi_y"']
&M^2x \arrow[d,"\psi_{Mx}"]
\\
y \arrow[r,"h"']
&Mx
\end{tikzcd}
\]
Thus $h=\psi_{Mx}\circ Mh\circ\psi_y\inv$.
Similarly, we get $k=\psi_{Mx}\circ Mk\circ\psi_y\inv$, and hence
\[
k = \psi_{Mx}\circ Mh\circ\psi_y\inv = h.\qedhere
\]
\end{proof}

\subsection*{Applying to maximalization}

Let now
\begin{itemize}
\item
$\CC=\co$

\item
$M=\ma$

\item
$\psi:\ma\to\id_\CC$ 
\end{itemize}
be as in \secref{categories}. Then, as seen in this section, all the properties  of $\CC, M$ and $\psi$ 
assumed at the beginning of the present section are satisfied. Moreover,
 $\CC_m= \co_m$ is the full subcategory of \co\ whose objects are the maximal coactions.  
Applying \thmref{aup}, we immediately recover Fischer's universal property for maximalization, that is, Theorem \ref{aup} holds.

\section{Dualizing}\label{dualizing}

Our abstract result in Section \ref{abstract} has a dual version. For completeness we state this result below, and sketch how it can be used to show the universal property of normalization for coactions. 
 Since some direct proofs of this property are available (see \cite[Lemma 4.4]{hqrw} and \cite[Proposition 6.1.5 and Remark 6.1.8 ii)]{reflective}), our main purpose here is to illustrate that maximalization and normalization of coactions play a dual r\^ole.

 Suppose $\CC$ is a category, $N:\CC\to\CC$ is a functor, and $\eta:\id_\CC\to N$ is a natural transformation such that $N\eta:N\to N^2$ is a natural isomorphism. 
 
Let  $\CC_n$ be the full subcategory of $\CC$ formed by those objects $x$ for which $\eta_x$ is an isomorphism. Then we have:

\begin{prop}[Abstract universal property II]\label{aup2}
If  $g:x\to y$ in $\CC$ with $y\in\CC_n$, then there is a unique morphism 
$g' : Nx\to y$ in $\CC$  such that $g' \circ \eta_x = g$.
\end{prop}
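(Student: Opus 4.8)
The plan is to dualize the proof of \propref{aup}: \propref{aup2} is its formal categorical dual (reverse every arrow and interchange $(M,\psi)$ with $(N,\eta)$), so the argument should transport essentially verbatim. I sketch the two halves below.

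For existence, I would invoke naturality of $\eta$ at $g\:x\to y$, which gives $Ng\circ\eta_x=\eta_y\circ g$. Since $y\in\CC_n$, the component $\eta_y$ is an isomorphism, so I can define $g':=\eta_y\inv\circ Ng\:Nx\to y$; a one-line check then shows $g'\circ\eta_x=\eta_y\inv\circ Ng\circ\eta_x=\eta_y\inv\circ\eta_y\circ g=g$, as required. This mirrors the construction $\wilde f=Mf\circ\psi_y\inv$ in \propref{aup}, with the invertible component now on the target side.

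For uniqueness, suppose $h,k\:Nx\to y$ both satisfy $h\circ\eta_x=k\circ\eta_x=g$. Applying the functor $N$ and using functoriality gives $Nh\circ N\eta_x=Ng=Nk\circ N\eta_x$; since $N\eta_x=(N\eta)_x$ is an isomorphism by hypothesis, cancelling it on the right yields $Nh=Nk$. Then naturality of $\eta$ applied to $h$ (respectively $k$) gives $\eta_y\circ h=Nh\circ\eta_{Nx}$ and $\eta_y\circ k=Nk\circ\eta_{Nx}$, whence $h=\eta_y\inv\circ Nh\circ\eta_{Nx}$ and $k=\eta_y\inv\circ Nk\circ\eta_{Nx}$; since $Nh=Nk$, this forces $h=k$.

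I do not anticipate a genuine obstacle here beyond bookkeeping. The only points requiring care are orienting the two naturality squares correctly — one applied to $g$ for existence, one applied to $h$ (and $k$) for uniqueness — and noting that invertibility of $\eta_y$ (from $y\in\CC_n$) and of $N\eta_x$ (from the standing hypothesis that $N\eta$ is a natural isomorphism) are each used exactly once, in precise duality with the uses of $\psi_y\inv$ and $M\psi_x\inv$ in the proof of \propref{aup}.
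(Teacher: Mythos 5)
Your argument is correct and coincides with the paper's: the paper dispatches \propref{aup2} by simply noting that the proof of \propref{aup} dualizes, giving $g'=\eta_y\inv\circ Ng$ for existence, and your uniqueness step is the verbatim dual of the one there. The only nit is your closing tally --- invertibility of $\eta_y$ is in fact used twice, once for existence and again in $h=\eta_y\inv\circ Nh\circ\eta_{Nx}$ --- but this is immaterial to correctness.
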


The proof is similar to the proof of Proposition \ref{aup}. E.g., $g'$ is given by $g' = \eta_y\inv\circ Ng$.

\subsection*{Applying to normalization} 

Normality and normalization of a coaction can be defined in different (but equivalent) ways, see for instance 
\cite{fullred, maximal, enchilada, hqrw, reflective, dualitiescoactions}. We 
will follow \cite{fullred, enchilada}, but adapt the terminology to make it fit with the one used in \secref{categories}. 

 Let $(A\rtimes_\delta G, j_A, j_G)$ denote a crossed product for a given coaction $(A, \delta)$.
 We define $(A, \delta)$ to be \emph{normal} when $j_A: A \to M(A\rtimes_\delta G)$ is injective. (This does not depend on the choice of crossed product.)
 
 We recall from Proposition 2.3 in \cite{fullred} (or Lemma A.55 in \cite{enchilada}) that if $(B, \pi, \mu)$ is a covariant representation of $(A, \delta)$, 
 then there is a coaction,
which we denote here by $\delta_{(\pi,\mu)}$,
on $\pi(A)$ such that $(\pi(A), \delta_{(\pi,\mu)})$ is normal.  In particular, the coaction $(A^n, \delta^n):=(j_A(A), \delta_{(j_A,j_G)})$ is normal. 
 We call it the \emph{normalization of $(A, \delta)$} and denote by $\eta_{(A, \delta)}$ the homomorphism $j_A$, considered as a map from $A$ to $j_A(A)$. 
 (Alternatively, we could set $A^n= A/\ker j_A \simeq j_A(A)$ and let $\eta_{(A, \delta)}$ denote the quotient map.)    
   
   As $\eta_{(A, \delta)}$ is  nondegenerate and $\delta - \delta^n$ equivariant, $\eta_{(A, \delta)}$ is a morphism from $(A, \delta)$ to $(A^n, \delta^n)$ in $\co$. 
  For completeness we also mention that the induced  \cs-morphim 
  $\eta_{(A,\delta)} \rtimes G: A\rtimes _\delta G \to A^n\rtimes_{\delta^n} G$ 
  is an isomorphism that is $\widehat\delta - \widehat{\delta^n}$ equivariant, cf.~Lemma A.46 in \cite{enchilada} or Proposition 2.6 in \cite{fullred}. 
 
 The normalization functor on coactions is introduced in \cite{hqrw} and \cite{reflective}, but both these approaches build on the universal property 
 of normalization. We therefore indicate how this can be avoided. 
  Let $\phi : (A, \delta) \to (B, \varepsilon)$ be a morphism in $\co$. Then there is a unique morphism $\phi^n : (A^n, \delta^n) \to (B^n, \varepsilon^n)$ in $\co$ 
  such that 
 \begin{equation} \label{phi-n} \phi^n \circ \eta_{(A, \delta)} = \eta_{(B^n, \varepsilon^n)} \circ \phi.\end{equation}
 A proof of this statement  without using the universal property goes like this.\footnote{This argument is a special case of a more general argument 
 used in  the proof of \cite[Lemma 4.4]{hqrw}.}
 Since equation (\ref{phi-n}) says that $\phi^n(j_A(a)) = j_{B^n}(\phi(a))$ for all $a\in A$, the uniqueness part is clear. 
 This formula  will actually define $\phi^n$ if we know that $\ker j_A$ is contained in $\ker \phi$. Since $j_{B^n}$ is injective, it suffices to show 
 that $\ker j_A \subseteq \ker (j_{B^n}\circ \phi)$. As one readily verifies that $(j_{B^n}\circ\, \phi, j_G)$ is a covariant representation of $(A, \delta)$, 
 we get that $((j_{B^n}\circ \phi) \times j_G)\circ j_A = j_{B^n}\circ \phi$, so this assertion is clear. 
 It is then obvious  that $\phi^n$ is a \cs-morphism and it is straightforward to check that it is $\delta^n - \varepsilon^n$ equivariant. 
 
 We can now proceed and assert that  the assignments \[(A, \delta) \mapsto (A^n, \delta^n) \text{ and } \phi \mapsto \phi^n\] give a functor ${\rm{\bf Nor}}: \co \to \co$, 
 called the \emph{normalization functor}. To check that ${\rm{\bf Nor}}$ is really a functor is routine. For example, let $\phi : (A, \delta) \to (B, \varepsilon)$ and 
 $\psi :  (B, \varepsilon) \to (C, \gamma)$ be morphisms in $\co$. Then using (\ref{phi-n}), one gets that 
 $(\psi^n\circ \phi^n)\circ \eta_{(A, \delta)} = \eta_{(C^n, \gamma^n)}\circ (\psi\circ\phi)$. 
 By uniqueness, this gives that $(\psi\circ \phi)^n= \psi^n\circ \phi^n$, 
 i.e., ${\rm{\bf Nor}}(\psi\circ \phi) = {\rm{\bf Nor}}(\psi)\circ {\rm{\bf Nor}}(\phi)$.
 
We note  that equation (\ref{phi-n}) says that the  transformation $\eta: {\rm Id}_{\co} \to \rm{\bf Nor}$ associated to the family $\{\eta_{(A, \delta)}\}$ is natural.  It is also obvious that the full subcategory $\co_n$ of $\co$ whose objects $(A, \delta)$ are such that $\eta_{(A, \delta)}$ is an isomorphism is the category of normal coactions of $G$. Thus, to see that we may take  $N=\rm{\bf Nor}$ and $\CC = \co$ in Proposition \ref{aup2} to deduce that the universal property of normalization holds, it remains only to check that ${\rm{\bf Nor}}\, \eta:\rm{\bf Nor}\to \rm{\bf Nor}^2$ is a natural isomorphism. 

If  $(A, \delta)$ is any coaction, then ${\rm{\bf Nor}}\, \eta_{(A, \delta)} = (\eta_{(A, \delta)})^n$ is a morphism in $\co$ from $(A^n, \delta^n)$ to ${\rm{\bf Nor}} (A^n, \delta^n) = ((A^n)^n, (\delta^n)^n)$. Now,  $(\eta_{(A, \delta)})^n$ is uniquely determined by the identity
 \[(\eta_{(A, \delta)})^n \circ \eta_{(A,\delta)} = \eta_{(A^n, \delta^n)} \circ \eta_{(A,\delta)}.\]
 Since $\eta_{(A,\delta)}$ is surjective as a map from $A$ into $A^n$, this means that \[(\eta_{(A, \delta)})^n = \eta_{(A^n, \delta^n)}.\] 
 As $(A^n, \delta^n)$ is normal,  we know that $j_{A^n}$ is injective. Thus, we get that $(\eta_{(A, \delta)})^n = \eta_{(A^n, \delta^n)}: A^n \to (A^n)^n= j_{A^n}(A^n)$ is an isomorphism, as desired. 
 
 We have thereby given another proof of the universal property of normalization:
 
 \begin{thm}\label{univ-normal}
If $\phi\: (A,\delta)\to(B,\epsilon)$ is a morphism in \co\ and the coaction $(B,\epsilon)$ is normal,
then there is a unique morphism 
$\phi^n$
such that the diagram
\[
\begin{tikzcd}
(A,\delta) \arrow[d,"\eta_{(A,\delta)}"'] \arrow[dr,"\phi"]
\\
(A^n,\delta^n) \arrow[r,dashed, "\phi^n", swap]
&(B,\epsilon)
\end{tikzcd}
\]
commutes in \co.
\end{thm}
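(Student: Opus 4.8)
The plan is to obtain \thmref{univ-normal} as an immediate instance of \propref{aup2}, since every hypothesis of that abstract proposition has already been verified in the discussion above. Concretely, I would instantiate the abstract data by taking $\CC=\co$, $N={\rm{\bf Nor}}$, and $\eta:\id_\co\to{\rm{\bf Nor}}$ the natural transformation assembled from the family $\{\eta_{(A,\delta)}\}$. We have checked that ${\rm{\bf Nor}}$ is a functor, that $\eta$ is natural (this is precisely \eqref{phi-n}), that the full subcategory $\co_n$ on which $\eta$ is an isomorphism is exactly the category of normal coactions, and---the one genuinely substantive point---that ${\rm{\bf Nor}}\,\eta:{\rm{\bf Nor}}\to{\rm{\bf Nor}}^2$ is a natural isomorphism. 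With these identifications the abstract machinery applies verbatim.

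First I would note that the normality hypothesis on $(B,\epsilon)$ says precisely that $(B,\epsilon)\in\co_n$, so $(B,\epsilon)$ serves as the object $y$ of \propref{aup2}. Taking $g=\phi:(A,\delta)\to(B,\epsilon)$ as the given morphism $g:x\to y$, \propref{aup2} then produces a unique morphism $g':{\rm{\bf Nor}}(A,\delta)\to(B,\epsilon)$ in $\co$ with $g'\circ\eta_{(A,\delta)}=g$. Since ${\rm{\bf Nor}}(A,\delta)=(A^n,\delta^n)$ by definition of the normalization functor, setting $\phi^n:=g'$ gives a morphism $\phi^n:(A^n,\delta^n)\to(B,\epsilon)$ with $\phi^n\circ\eta_{(A,\delta)}=\phi$, which is exactly commutativity of the displayed triangle; the uniqueness of $\phi^n$ is the uniqueness clause of \propref{aup2}.

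Because the real content has been front-loaded into the proof that ${\rm{\bf Nor}}\,\eta$ is a natural isomorphism, I expect no obstacle in this final step. If one wished to unwind the abstract argument, the explicit formula from the proof of \propref{aup2} reads $\phi^n=\eta_{(B,\epsilon)}\inv\circ{\rm{\bf Nor}}(\phi)$, in which $\eta_{(B,\epsilon)}$ is invertible precisely because $(B,\epsilon)$ is normal---this is where the normality hypothesis is used. The one spot warranting care is purely notational: the symbol $\phi^n$ in the theorem denotes this universal arrow into $(B,\epsilon)$, which in general differs from ${\rm{\bf Nor}}(\phi):(A^n,\delta^n)\to(B^n,\epsilon^n)$ and agrees with it only up to the isomorphism $\eta_{(B,\epsilon)}$ available when $(B,\epsilon)$ is normal. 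One should also be sure to invoke the dual proposition \propref{aup2} rather than \propref{aup}, since here $\eta$ runs \emph{from} $\id_\co$ \emph{into} $N$, so the universal arrow emanates from $(A^n,\delta^n)$ instead of terminating at it.
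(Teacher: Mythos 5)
Your proposal is correct and matches the paper's own proof essentially verbatim: the paper likewise deduces \thmref{univ-normal} by instantiating \propref{aup2} with $\CC=\co$, $N={\rm{\bf Nor}}$, and $\eta$ the natural transformation built from the maps $\eta_{(A,\delta)}$, after having verified (exactly as you list) that ${\rm{\bf Nor}}$ is a functor, $\eta$ is natural via \eqref{phi-n}, $\co_n$ is the category of normal coactions, and ${\rm{\bf Nor}}\,\eta$ is a natural isomorphism. Your closing remarks---the explicit formula $\phi^n=\eta_{(B,\epsilon)}\inv\circ{\rm{\bf Nor}}(\phi)$ and the caution to use the dual proposition \propref{aup2} rather than \propref{aup}---are accurate and consistent with the paper's sketch.
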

\appendix

\section{Stabilization equivalence} \label{stab eq}

In this appendix we record a completely general version of the equivalence between the categories \kco\ and \co\ mentioned in Section \ref{categories}; 
the separable case where $G$ is assumed to be second-countable appears in \cite[Theorem~4.4]{stable}.

\begin{thm}
The relative commutant functor $\com : \kco \to \co$ is a category equivalence, with quasi-inverse $\st:\co\to\kco$ that takes
an object $(A,\delta)$ to the $\KK$-fixing coaction $(A\xt\KK,\delta\xts,1\xt\id_\KK)$ and a morphism $\phi:(A,\delta)\to (B,\epsilon)$ to
$\phi\xt\id:(A\xt\KK,\delta\xts,1\xt\id_\KK)\to (B\xt\KK,\epsilon\xts,1\xt\id_\KK)$.
\end{thm}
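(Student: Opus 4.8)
The plan is to produce the two natural isomorphisms $\st\circ\com\cong\id_{\kco}$ and $\com\circ\st\cong\id_\co$ whose existence is exactly what it means for $\com$ and $\st$ to be quasi-inverse equivalences. Both are powered by a single input, the destabilization isomorphism
\[
\theta_{(A,\iota)}\colon C(A,\iota)\xt\KK\variso A,\qquad m\xt k\mapsto m\iota(k),
\]
whose validity for $\KK=\KK(H)$ with $H$ \emph{arbitrary} (in particular for $H=L^2(G)$ when $G$ is not second countable) is the content of \propref{stable}. The guiding principle is that every step other than the bijectivity of $\theta_{(A,\iota)}$ is a purely algebraic identity insensitive to separability, so the argument is the one behind the separable \cite[Theorem~4.4]{stable} with \propref{stable} inserted at the single place where separability was previously used.

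First I would confirm that $\st$ and $\com$ are functors. For $\st$ this is immediate from the definitions. For $\com$ the verifications were already recorded in \secref{categories}: Fischer's characterization of $C(A,\iota)$ together with the identity $C(A\xt C^*(G),\iota\xt 1)=C(A,\iota)\xt C^*(G)$ shows that $C(\delta)$ is a coaction and that a morphism $\phi$ in \kco\ restricts to a morphism $C(\phi)$ in \co, and functoriality is then routine.

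Next I would assemble the two natural isomorphisms. For $\st\circ\com\cong\id_{\kco}$, the maps $\theta_{(A,\delta,\iota)}$ are isomorphisms of \cst s by \propref{stable}; that each carries the decoration $1\xt\id_\KK$ to $\iota$, is equivariant for the relevant coactions, and is natural in morphisms of \kco\ are identities checked on elementary tensors, giving $\theta\colon\st\circ\com\variso\id_{\kco}$ as asserted in \secref{categories}. For $\com\circ\st\cong\id_\co$, I would compute $\com\st(A,\delta)=(C(A\xt\KK,1\xt\id_\KK),C(\delta\xts))$ and observe, using Fischer's characterization, that $C(A\xt\KK,1\xt\id_\KK)=A\xt 1\cong A$ via $a\xt 1\mapsto a$, under which $C(\delta\xts)$ becomes $\delta$; naturality then yields a natural isomorphism $\com\circ\st\variso\id_\co$, namely the $\kappa$ of \eqref{kappa}. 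With both natural isomorphisms in hand the equivalence follows.

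The single genuinely new point, and the main obstacle, is \propref{stable} itself: the bijectivity of $\theta_{(A,\iota)}$ when $H$ need not be separable. The separable arguments run through sequential approximate units and a countable system of matrix units for $\KK$; the general case replaces these by nets indexed by the finite-rank projections associated to an arbitrary orthonormal basis. Once \propref{stable} is established, the equivariance of $\theta_{(A,\delta,\iota)}$, the commutant computation $C(A\xt\KK,1\xt\id_\KK)=A\xt 1$, and all naturality squares are formal and go through verbatim as in the second-countable treatment of \cite[Theorem~4.4]{stable}.
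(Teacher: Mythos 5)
Your proposal is correct and takes essentially the same route as the paper: the paper's proof consists of the single observation that the argument for \cite[Theorem~4.4]{stable} goes through without change once the separable destabilization result \cite[Proposition~3.4]{stable} is replaced by the general \propref{stable}, which is exactly the reduction you identify as the one genuinely new point. The extra detail you supply (naturality of $\theta$, the computation $C(A\xt\KK,1\xt\id_\KK)=A\xt 1$ giving $\com\circ\st\variso\id_\co$) simply makes explicit what the cited proof contains.
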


\begin{proof}

The proof in \cite{stable} that $\st:\co\to\kco$ is a category equivalence, with quasi-inverse \com, goes through without any changes,
 except that we use \propref{stable} below instead of \cite[Proposition 3.4]{stable}.
\end{proof}


\begin{prop}\label{stable}
If $(A, \iota)$ is a \kalg, then there is an isomorphism
$\theta : C(A, \iota) \xt \KK\variso A$ given on elementary tensors by
$\theta(m \xt k) = m\iota(k)$.
\end{prop}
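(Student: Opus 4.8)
The plan is to fix matrix units and reduce the whole statement to a claim about a single corner of $A$, isolating a convergence issue that is invisible when $H$ is separable. Concretely, I would choose an orthonormal basis $(\xi_i)_{i\in I}$ of $H$, let $(e_{ij})_{i,j\in I}$ be the associated matrix units in $\KK=\KK(H)$, so that $e_{ij}e_{kl}=\delta_{jk}e_{il}$ and $e_{ij}^*=e_{ji}$, and extend $\iota$ (as a morphism in \cs) to a strictly continuous unital $*$-homomorphism $M(\KK)=B(H)\to M(A)$, keeping the name $\iota$. Fix a distinguished index $0\in I$ and put $q=e_{00}$. The first step is to check that $\theta$ is a well-defined $*$-homomorphism: on the algebraic tensor product the assignment $m\xt k\mapsto m\iota(k)$ is multiplicative and $*$-preserving, which is immediate from the two defining properties of $C(A,\iota)$, namely that each $m$ commutes with every $\iota(k)$ and that $m\iota(k)\in A$. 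Since $\KK$ is nuclear, the minimal and maximal $C^*$-tensor norms on $C(A,\iota)\xt\KK$ agree, so this algebraic $*$-homomorphism extends to $C(A,\iota)\xt\KK$, with range in $A$.

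The technical heart of the argument is the compression map $\Psi\:C(A,\iota)\to\iota(q)A\iota(q)$, $\Psi(m)=m\iota(q)$, which I claim is a $*$-isomorphism. It is well defined because $m$ commutes with $\iota(q)=\iota(q)^2$, so $m\iota(q)=\iota(q)m\iota(q)\in\iota(q)A\iota(q)$. For injectivity I would argue that if $m\iota(q)=0$ then, using $e_{i0}=e_{i0}q$ together with the commutation of $m$ with $\iota(\KK)$, one gets $m\iota(e_{ij})=0$ for all $i,j$, hence $m\iota(\KK)=0$; nondegeneracy of $\iota$, i.e. $\clspn\{\iota(\KK)A\}=A$, then forces $m=0$.

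Surjectivity of $\Psi$ is where I expect the main obstacle to lie, and it is precisely the point at which the separable argument of \cite[Proposition~3.4]{stable} must be modified. Given $c\in\iota(q)A\iota(q)$, I would set $m=\sum_{i\in I}\iota(e_{i0})c\iota(e_{0i})$, interpreted as a strict limit in $M(A)$. Writing $x_i=\iota(e_{i0})c\iota(e_{0i})$, one computes $x_i^*x_j=\delta_{ij}\,\iota(e_{i0})c^*c\iota(e_{0i})$, so the summands are mutually orthogonal and every finite partial sum has norm at most $\|c\|$; moreover against each element $\iota(e_{k0})a$ only the $i=k$ term survives, and such elements are total in $A$ by nondegeneracy. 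Uniform boundedness together with convergence on a total set yields strict convergence of the net of partial sums over the possibly uncountable index set $I$, which is the one genuinely non-formal step. One then verifies directly that $m\iota(e_{kl})=\iota(e_{k0})c\iota(e_{0l})=\iota(e_{kl})m$ for all $k,l$, so $m$ commutes with $\iota(\KK)$ and $m\iota(\KK)\subseteq A$, whence $m\in C(A,\iota)$, and that $m\iota(q)=\iota(q)c\iota(q)=c$.

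With $\Psi$ in hand the two properties of $\theta$ follow quickly. For injectivity, given $x$ with $\theta(x)=0$, the compression $(1\xt e_{0i})\,x\,(1\xt e_{j0})$ lies in $C(A,\iota)\xt\C e_{00}$, say it equals $b_{ij}\xt e_{00}$; applying $\theta$ gives $\Psi(b_{ij})=\iota(e_{0i})\theta(x)\iota(e_{j0})=0$, so $b_{ij}=0$ for all $i,j$, and since these compressions determine $x$ (via $x=\lim_F(1\xt P_F)x(1\xt P_F)$ with $P_F=\sum_{i\in F}e_{ii}$) we get $x=0$; alternatively one may invoke the simplicity of $\KK$ to write $\ker\theta=J\xt\KK$ and conclude $J=0$ from injectivity of $\Psi$. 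Being an injective $*$-homomorphism, $\theta$ is isometric and has closed range. Finally, for surjectivity, for $a\in A$ nondegeneracy gives $\iota(P_F)a\iota(P_F)\to a$ in norm, and a short computation shows $\iota(P_F)a\iota(P_F)=\theta\bigl(\sum_{i,j\in F}\Psi\inv(\iota(e_{0i})a\iota(e_{j0}))\xt e_{ij}\bigr)$, which lies in the range of $\theta$; since that range is closed, $a$ lies in it. Hence $\theta$ is an isometric $*$-isomorphism onto $A$, as desired.
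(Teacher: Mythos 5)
Your proof is correct, and its ``non-formal'' convergence step is handled exactly as the general case requires; but it is assembled along a genuinely different route from the paper's. The raw materials coincide: matrix units $e_{ij}$, the distinguished corner $\iota(q)A\iota(q)$ (the paper's $A_{pp}$, with your index $0$ playing the role of $p$), and the strictly convergent sum $c\mapsto\sum_{i}\iota(e_{i0})\,c\,\iota(e_{0i})$, which is precisely the paper's map $\sigma$. The paper, however, follows \cite[Proposition~3.4]{stable} and builds an isomorphism abstractly: it decomposes $A$ as the inductive limit of the corners $A_F=\sum_{i,j\in F}e_{ii}Ae_{jj}\simeq A_{pp}\xt M_F$ over finite subsets $F$, invokes the inductive-limit isomorphism of \cite[Proposition~11.4.1]{kadrin2} to get $A\simeq A_{pp}\xt\KK$, and uses $\sigma$ only to relate $A_{pp}$ to $C(A,\iota)$, recording merely $\sigma(A_{pp})\subset C(A,\iota)$. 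Your pivot is instead the lemma that the compression $\Psi\:C(A,\iota)\to\iota(q)A\iota(q)$, $m\mapsto m\iota(q)$, is a $*$-isomorphism --- your strict sum being exactly $\Psi\inv=\sigma$ --- after which you prove injectivity and surjectivity of the given map $\theta$ directly, by compressing with $1\xt e_{0i}$, $1\xt e_{j0}$ and $\iota(P_F)$. What your route buys: it verifies that the isomorphism is the \emph{specific} map $\theta(m\xt k)=m\iota(k)$ of the statement, and it makes explicit two points the paper's outline leaves to the adaptation of \cite{stable}, namely that $\sigma$ is a bijection onto $C(A,\iota)$ and \emph{why} the defining sum converges strictly over a possibly uncountable index set (uniform boundedness of the mutually orthogonal partial sums together with eventual constancy on the total sets $\iota(e_{k0})A$ and $A\iota(e_{0k})$). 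What the paper's route buys is brevity: the inductive-limit machinery absorbs in one stroke the $P_F$-approximation arguments you carry out by hand. Two harmless simplifications you could make: the extension of $\iota$ to $B(H)$ is unnecessary, since each $P_F$ is finite-rank and hence already in $\KK$; and the appeal to nuclearity of $\KK$ can be bypassed by noting that $\theta$ is contractive on each $C^*$-subalgebra $C(A,\iota)\xt M_F$, whose union is dense in $C(A,\iota)\xt\KK$.
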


\begin{proof}
We outline how the argument from \cite[Proposition~3.4]{stable} can be adapted to cover the general case.
Choose a system $\{u_{ij}\}_{i,j\in S}$ of matrix units for $\KK$, and let $e_{ij}=\iota(u_{ij})$ be the corresponding matrix units in $M(A)$.
For each $i,j\in S$, let $A_{ij}=e_{ii}Ae_{jj}$. Let $\FF$ be the family of finite subsets of $S$, directed by inclusion.
For each $F\in \FF$ let
\[
A_F=\sum_{i,j\in F} A_{ij}.
\]
Fix $p\in S$. For all $i,j$ there is a linear bijection $\tau_{ij}:A_{ij}\to A_{pp}$ given by
\[
\tau_{ij}(a)=e_{pi}ae_{jp}.
\]
Note that $\tau_{ij}\inv(a)=e_{ip}ae_{pj}$. For each $F\in\FF$ there is a \cs-isomorphism
\[
\phi_F:A_F\variso A_{pp}\xt M_F
\]
given by
\[
\phi_F\left(\sum_{i,j\in F}a_{ij}\right)
=\sum_{i,j\in F}\bigl(\tau_{ij}(a_{ij})\xt u_{ij}\bigr).
\]
Define a \cs-morphism $\sigma:A_{pp}\to A$ by the strictly convergent sum
\[
\sigma(a)=\sum_{i\in S}\ad e_{ip}(a)=\sum_{i\in S}\tau_{ii}\inv(a).
\]
Then
\begin{align*}
    e_{ij}\sigma(a)
    &=\sum_{k\in S}e_{ij}e_{kp}ae_{pk}
    \\&=e_{ip}ae_{pj}
    \\&=\sum_{k\in S}e_{kp}ae_{pk}e_{ij}
    \\&=\sigma(a)e_{ij},
\end{align*}
so $\sigma(A_{pp})\subset C(A,\iota)$.

Note that for $F\subset E$ in $\FF$ we have canonical embeddings $A_F\hookrightarrow A_E$ and $M_F\hookrightarrow M_E$,
and we have an inductive-limit isomorphism (see \cite[Proposition~11.4.1]{kadrin2})
\[
\phi:=\varinjlim \phi_F:\varinjlim A_F
\variso \varinjlim (A_{pp}\xt M_F),
\]
and
\[
\varinjlim A_F=A
\midtext{and}
\varinjlim (A_{pp}\xt M_F)
=A_{pp}\xt \KK
\]
because $\varinjlim M_F=\KK$.
\end{proof}


\begin{thebibliography}{10}

\bibitem{reflective}
E.~B\'{e}dos, S.~Kaliszewski, and J.~Quigg.
\newblock Reflective-coreflective equivalence.
\newblock {\em Theory Appl. Categ.}, 25:No. 6, 142--179, 2011.

\bibitem{maximal}
S.~Echterhoff, S.~Kaliszewski, and J.~Quigg.
\newblock Maximal coactions.
\newblock {\em Internat. J. Math.}, 15(1):47--61, 2004.

\bibitem{enchilada}
S.~Echterhoff, S.~Kaliszewski, J.~Quigg, and I.~Raeburn.
\newblock A categorical approach to imprimitivity theorems for
  {$C^*$}-dynamical systems.
\newblock {\em Mem. Amer. Math. Soc.}, 180(850):viii+169, 2006.

\bibitem{exelbook}
R.~Exel.
\newblock {\em Partial dynamical systems, {F}ell bundles and applications},
  volume 224 of {\em Mathematical Surveys and Monographs}.
\newblock American Mathematical Society, Providence, RI, 2017.

\bibitem{fischer}
R.~Fischer.
\newblock Maximal coactions of quantum groups.
\newblock SFB 478 -- Geometrische Strukturen in der Mathematik, 350, Univ.
  M\"unster, 2004.
  
\bibitem{HR}
J.v.B. Hjelmborg and M. R\o rdam.
\newblock On stability of $C^*$-algebras.
\newblock {\em J. Funct. Anal.}, 155:153--170, 1998.

\bibitem{hqrw}
A.~an~Huef, J.~Quigg, I.~Raeburn, and D.~P. Williams.
\newblock Full and reduced coactions of locally compact groups on
  {$C^\ast$}-algebras.
\newblock {\em Expo. Math.}, 29(1):3--23, 2011.

\bibitem{kadrin2}
R.~V. Kadison and J.~R. Ringrose.
\newblock {\em Fundamentals of the theory of operator algebras. {V}ol. {II}},
  volume~16 of {\em Graduate Studies in Mathematics}.
\newblock American Mathematical Society, Providence, RI, 1997.
\newblock Advanced theory, Corrected reprint of the 1986 original.

\bibitem{stable}
S.~Kaliszewski, T.~Omland, and J.~Quigg.
\newblock Destabilization.
\newblock {\em Expo. Math.}, 34(1):62--81, 2016.

\bibitem{dualitiescoactions}
\bysame.
\newblock Dualities for maximal coactions.
\newblock {\em J. Aust. Math. Soc.}, 102(2):224--254, 2017.

\bibitem{leinster}
T. Leinster.
\newblock {\em Basic category theory}, volume 143 of {\em Cambridge Studies in
  Advanced Mathematics}.
\newblock Cambridge University Press, Cambridge, 2014.

\bibitem{nilsen}
M. Nilsen.
\newblock Duality for full crossed products of $C^*$-algebras by non-amenable groups.
\newblock {\em Proc. Amer. Math. Soc.}, 126(10):2969--2978, 1998.

\bibitem{fullred}
J.~C. Quigg.
\newblock Full and reduced {$C^*$}-coactions.
\newblock {\em Math. Proc. Cambridge Philos. Soc.}, 116(3):435--450, 1994.

\end{thebibliography}

\end{document}